\newtheorem{theorem}{Theorem}
\newtheorem{lemma}[theorem]{Lemma}
\newtheorem{corollary}[theorem]{Corollary}
\theoremstyle{remark}
\newtheorem{remark}[theorem]{Remark}
\theoremstyle{definition}
\newtheorem{definition}[theorem]{Definition}
\begin{document}

\title[Graph Cases of Riemannian PMT and Penrose Inequality]{The Graph Cases of the Riemannian Positive Mass and Penrose Inequalities in All Dimensions}
\date{October 20, 2010}
\author{Mau-Kwong George Lam}
\address{Mathematics Department\\
Duke University, Box 90320\\
Durham, NC 27708-0320}
\email{glam@math.duke.edu}

\begin{abstract}
We consider complete asymptotically flat Riemannian manifolds that are the graphs of smooth functions over $\mathbb R^n$. By recognizing the scalar curvature of such manifolds as a divergence, we express the ADM mass as an integral of the product of the scalar curvature and a nonnegative potential function, thus proving the Riemannian positive mass theorem in this case. If the graph has convex horizons, we also prove the Riemannian Penrose inequality by giving a lower bound to the boundary integrals using the Aleksandrov-Fenchel inequality.
\end{abstract}

\maketitle

\section{Introduction}

The Riemannian positive mass theorem states that an asymptotically flat Riemannian manifold $M^n$ with nonnegative scalar curvature has nonnegative ADM mass, and that the ADM mass is strictly positive unless $M^n$ is isometric to flat $\mathbb R^n$. The theorem was first proved in 1979 by Schoen and Yau \cites{schoen_yau1,schoen_yau2} for manifolds of dimension $n\leq 7$ using minimal surface techniques. Witten \cite{witten} later proved the theorem for spin manifolds of any dimension using spinors and the Dirac operator.

The Penrose inequality can be viewed as a generalization of the positive mass theorem in the presence of an area outer minimizing horizon. A horizon is simply a minimal surface, and we say that it is area outer minimizing if every other surface which encloses it has greater area. When an asymptotically flat Riemannian manifold $M^n$ with nonnegative scalar curvature contains an area outer minimizing horizon $\Sigma$, the Riemannian Penrose inequality gives a lower bound for the ADM mass $m$ in terms of the $(n-1)$ volume $A$ of $\Sigma$ and the volume $\omega_{n-1}$ of the unit $(n-1)$ sphere:
\begin{equation} \label{rpi}
	m \geq \frac 12 \left( \frac{A}{\omega_{n-1}} \right) ^{\frac{n-2}{n-1}},
\end{equation}
with equality if and only if $M^n$ is isometric to a Schwarzschild metric. This inequality was first proved in dimension $n=3$ by Huisken and Illmanen \cite{imcf} in 1997 for the case of a single horizon using inverse mean curvature flow, a method originally proposed by Geroch \cite{geroch}, Jang and Wald \cite{jang_wald}. In 1999, Bray \cite{bray_RPI} extended this result to the general case of a horizon with multiple components using a conformal flow of metrics. Note that in dimension three, the Riemannian Penrose inequality is $m \geq \sqrt{A/16\pi}$. Later, Bray and Lee \cite{bray_lee} generalized Bray's proof for dimensions $n\leq 7$, with the extra requirement that $M$ be spin for the rigidity statement. To the author's knowledge, there are no known proofs of the Riemannian Penrose inequality in dimensions $n\geq 8$ other than in the spherically symmetric cases.

The case of equality of the Riemannian Penrose inequality is attained by the Schwarzschild metric. It is conformal to $\mathbb R^n\backslash\{0\}$ and may be expressed as
\begin{equation*}
	\left( \mathbb R^n \backslash \{0\}, \left( 1 + \frac{m}{2|x|^{n-2}} \right) ^{4/(n-2)} \delta \right),
\end{equation*}
where $m$ is a positive constant and $\delta$ is the flat Euclidean metric. Moreover, when $n=3$, the Schwarzschild metric $(M^3,g) = (\mathbb R^3\backslash\{0\}, (1+m/2|x|)^4\delta)$ can also be isometrically embedded as a rotating parabola in $\mathbb R^4$ as the set of points $\{(x,y,z,w)\}\subset\mathbb R^4$ satisfying $|(x,y,z)| = \frac{w^2}{8m}+2m$:

\begin{figure}[h]
   \begin{center}
   \includegraphics[width=120mm]{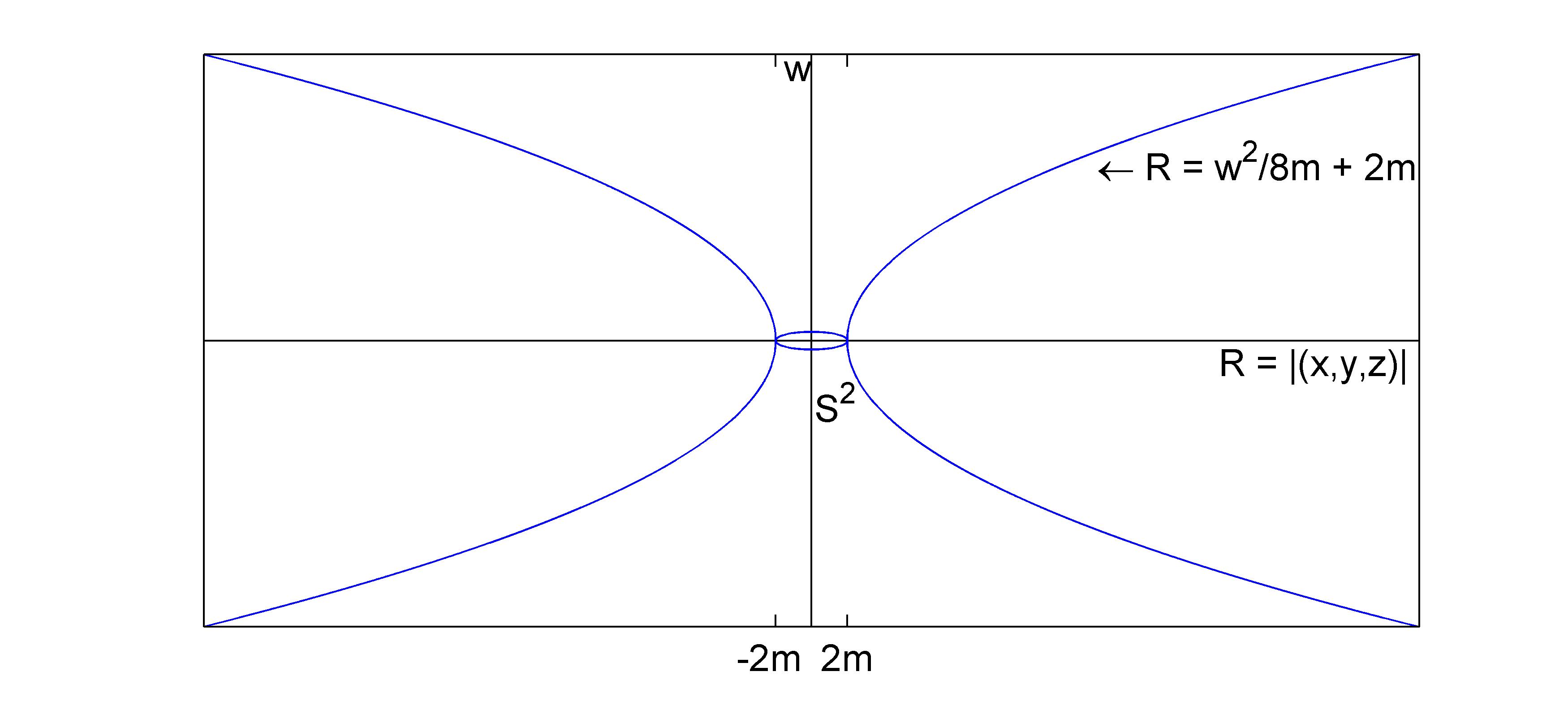}
   \end{center}
   \caption{The three dimensional Schwarzschild metric of mass $m>0$ (in blue) viewed as a spherically symmetric submanifold of four dimensional Euclidean space $\{(x,y,z,w)\}$ satisfying $R = w^2/8m + 2m$, where $R = \sqrt{x^2 + y^2 + z^2}$. Figure courtesy of Hubert Bray.\label{PositiveSchwarzschild}}
\end{figure}

Solving for $w$, we see that the end of the three dimensional Schwarzschild metric containing infinity, called the outer end, is the graph of the spherically symmetric function $f:\mathbb R^3\backslash B_{2m}(0)\to\mathbb R$ given by $f(r) = \sqrt{8m(r-2m)}$, where $r = |(x,y,z)|$. In this case, one can check directly that the ADM mass of $(M^3,g)$ is the positive constant $m$ by computing a certain boundary integral at infinity involving the function $f$.

That an end of the three dimensional Schwarzschild metric can be isometrically embedded in $\mathbb R^4$ as the graph of a function over $\mathbb R^3 \backslash B_{2m}(0)$ raises the following questions: if $\Omega$ is a bounded open set in $\mathbb R^n$ and $f$ is a smooth function on $\mathbb R^n \backslash \Omega$ such that the graph of $f$ is an asymptotically flat manifold $M$ with nonnegative scalar curvature $R$ and horizon $f(\partial\Omega)$, can we prove the Penrose inequality for $M$ using elementary techniques in this setting? And if so, do can get a stronger statement than the standard Penrose inequality? We answer both questions in the affirmative, and we will begin by proving a stronger version of the Riemannian positive mass theorem for manifolds that are graphs over $\mathbb R^n$ by expressing $R$ as a divergence and applying the divergence theorem, giving the ADM mass as an integral over the manifold of the product of $R$ and a nonnegative potential function. In the presence of a boundary whose connected components are convex, we prove a stronger Penrose inequality by giving lower bounds to the boundary integrals using the Aleksandrov-Fenchel inequality. Before we state our theorems, we begin with some definitions.

\begin{definition}\cite{schoen}
	A complete Riemannian manifold $(M^n,g)$ of dimension $n$ is said to be {\bf asymptotically flat} if there is a compact subset $K \subset M^n$ such that $M^n \backslash K$ is diffeomorphic to $\mathbb R^n \backslash \{|x|\leq 1\}$, and a diffeomorphism $\Phi:M^n\backslash K\to \mathbb R^n \backslash \{|x|\leq 1\}$ such that, in the coordinate chart defined by $\Phi$, $g=g_{ij}(x)dx^idx^j$, where
\begin{align*}
	g_{ij}(x)														 &= \delta_{ij}+O(|x|^{-p}) \\
	|x||g_{ij,k}(x)|+|x|^2|g_{ij,kl}(x)| &= O(|x|^{-p}) \\
	|R(g)(x)|														 &= O(|x|^{-q})
\end{align*}
for some $q>n$ and $p>(n-2)/2$.
\end{definition}

\begin{remark}
	Note that all this means is that outside a compact set, $M^n$ is diffeomorphic to $\mathbb R^n$ minus a closed ball and the metric $g$ decays sufficiently fast to the flat metric at infinity. The constants $p$ and $q$ are chosen so that the ADM mass (see below) is finite.
\end{remark}

For such an asymptotically flat manifold $(M^n,g)$, we can define its total mass (called the ADM mass):

\begin{definition} \cites{schoen}
	The {\bf ADM mass} $m$ of a complete, asymptotically flat manifold $(M^n,g)$ is defined to be
\begin{equation*}
	m = \lim_{r\to\infty} \frac{1}{2(n-1)\omega_{n-1}} \int_{S_r} \sum_{i,j} (g_{ij,i}-g_{ii,j}) \nu_j dS_r,
\end{equation*}
where $\omega_{n-1}$ is the volume of the $n-1$ unit sphere, $S_r$ is the coordinate sphere of radius $r$, $\nu$ is the outward unit normal to $S_r$ and $dS_r$ is the area element of $S_r$ in the coordinate chart.
\end{definition}
This definition in dimension three was originally due to Arnowitt, Deser and Misner \cite{adm}. Bartnik \cite{bartnik} showed that the ADM mass is independent of the choice of asymptotically flat coordinates.

Given a smooth function $f:\mathbb R^n \to \mathbb R$, the graph of $f$ is a complete Riemannian manifold. Since the graph of $f$ with the induced metric from $\mathbb R^{n+1}$ is isometric to $(M^n,g) = (\mathbb R^n, \delta + df\otimes df)$, we will from now on refer to $(M^n,g)$ as the graph of $f$. For such a graph, we can rephrase the notion of asymptotic flatness in terms of the function $f$:

\begin{definition}
	Let $f:\mathbb R^n \to \mathbb R$ be a smooth function and let $f_i$ denote the $i$th partial derivative of $f$. We say that $f$ is {\bf asymptotically flat} if
\begin{align*}
	f_i(x)													 &= O(|x|^{-p/2}) \\
	|x||f_{ij}(x)|+|x|^2|f_{ijk}(x)| &= O(|x|^{-p/2})
\end{align*}
at infinity for some $p > (n-2)/2$.
\end{definition}

We can now give the precise statement of our first theorem:

\begin{theorem}[Positive mass theorem for graphs over $\mathbb R^n$] \label{pmt}
Let $(M^n,g)$ be the graph of a smooth asymptotically flat function $f:\mathbb R^n \to \mathbb R$ with the induced metric from $\mathbb R^{n+1}$. Let $R$ be the scalar curvature and $m$ the ADM mass of $(M^n,g)$. Let $\nabla f$ denote the gradient of $f$ in the flat metric and $|\nabla f|$ its norm with respect to the flat metric. Let $dV_g$ denote the volume form on $(M^n,g)$. Then
\begin{equation*}
	m = \frac{1}{2(n-1)\omega_{n-1}} \int_{M^n} R \frac{1}{\sqrt{1+|\nabla f|^2}} dV_g.
\end{equation*}
In particular, $R \geq 0$ implies $m \geq 0$.
\end{theorem}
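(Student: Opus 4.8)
The plan is to convert the integral over the curved manifold into a flat integral, recognize the scalar curvature as a Euclidean divergence, and then apply the divergence theorem to produce the ADM mass as a flux at infinity. First I would record the basic data of the graph. Writing $v = \sqrt{1+|\nabla f|^2}$, one has $g_{ij} = \delta_{ij} + f_i f_j$, $g^{ij} = \delta^{ij} - f_i f_j/v^2$, and $\det g = v^2$, so that $dV_g = v\,dx$. Consequently the integrand simplifies dramatically:
\[
	R\,\frac{1}{\sqrt{1+|\nabla f|^2}}\,dV_g = R\,\frac{1}{v}\,(v\,dx) = R\,dx,
\]
and the claimed identity reduces to showing $m = \tfrac{1}{2(n-1)\omega_{n-1}} \int_{\mathbb R^n} R\,dx$, where $R$ is now viewed as a function on Euclidean $\mathbb R^n$.

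Next I would compute $R$ via the Gauss equation for the hypersurface $\{(x,f(x))\}\subset\mathbb R^{n+1}$. Since the ambient space is flat and the second fundamental form is $h_{ij} = f_{ij}/v$, the Gauss equation gives $R = H^2 - |h|^2$, where $H = g^{ij}h_{ij}$ and $|h|^2 = g^{ik}g^{jl}h_{ij}h_{kl}$. The heart of the argument is the divergence identity
\[
	R = \sum_{i,j}\partial_i\!\left[\frac{1}{v^2}\bigl(f_i f_{jj} - f_j f_{ij}\bigr)\right],
\]
that is, $R = \operatorname{div}_\delta V$ for the Euclidean vector field $V^i = \frac{1}{v^2}\sum_j(f_i f_{jj} - f_j f_{ij})$. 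I expect verifying this identity to be the main obstacle: expanding $H^2 - |h|^2$ produces terms with denominators $v^2,v^4,v^6$ built from the Hessian contracted with $\nabla f$, and one must check that differentiating $V$ — including the contributions from $\partial_i(v^{-2}) = -2v^{-4}\sum_k f_k f_{ki}$ — reproduces exactly those terms. As a partial sanity check, at any point where $\nabla f = 0$ both sides reduce to $(\Delta f)^2 - |\nabla^2 f|^2$; the full verification is a direct but lengthy tensor calculation.

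Finally I would integrate over a large ball $B_r$ and let $r\to\infty$. Since the domain is all of $\mathbb R^n$ there is no inner boundary, so the divergence theorem yields $\int_{B_r} R\,dx = \int_{S_r} V^i\nu_i\,dS$. Absolute convergence of $\int_{\mathbb R^n} R\,dx$ follows from $|R| = O(|x|^{-q})$ with $q>n$, so the left side converges and the flux has a limit. To identify that limit I would compute directly that
\[
	\sum_i (g_{ij,i} - g_{ii,j}) = f_j\,\Delta f - \sum_k f_k f_{kj} = \sum_k\bigl(f_j f_{kk} - f_k f_{kj}\bigr),
\]
so that $V^j = v^{-2}\sum_i(g_{ij,i}-g_{ii,j})$ differs from the ADM integrand only by the factor $v^{-2} = 1 - |\nabla f|^2/v^2$. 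The correction carries an extra factor $|\nabla f|^2 = O(|x|^{-p})$, and a decay count gives $\int_{S_r}(v^{-2}-1)\sum_i(g_{ij,i}-g_{ii,j})\nu_j\,dS = O(r^{\,n-2p-2})\to 0$ precisely because $p>(n-2)/2$. Hence the flux of $V$ at infinity equals $\lim_{r\to\infty}\int_{S_r}\sum_{i,j}(g_{ij,i}-g_{ii,j})\nu_j\,dS = 2(n-1)\omega_{n-1}\,m$, which yields $\int_{\mathbb R^n} R\,dx = 2(n-1)\omega_{n-1}\,m$ and the theorem. The concluding implication $R\geq 0 \Rightarrow m\geq 0$ is then immediate, since $v\geq 1>0$.
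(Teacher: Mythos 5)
Your proposal is correct and takes essentially the same route as the paper: the divergence identity you state for $R$ is precisely the paper's Lemma \ref{lemma1} (with indices relabeled), and your subsequent steps --- inserting the factor $v^{-2}$ into the ADM integrand, applying the flat divergence theorem, and converting via $dV_g = \sqrt{1+|\nabla f|^2}\,dV_\delta$ --- mirror the paper's proof of Theorem \ref{pmt}. The only differences are cosmetic: the paper verifies the divergence identity by computing $R$ from the Christoffel symbols of $g$ rather than from the Gauss equation, and your decay count justifying that the weighted flux has the same limit as the ADM integrand is actually more explicit than the paper's one-line assertion.
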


Now let $\Omega$ be a bounded open set in $\mathbb R^n$ with $\Sigma = \partial \Omega$ and $f$ a smooth function on $\mathbb R^n \backslash \Omega$. If $f(\Sigma)$ is contained in a level set of $f$, then the mean curvature $H$ of $f(\Sigma)$ in $(M^n,g)$ and the mean curvature $H_0$ with respect to the flat metric $\delta$ are related by
\begin{equation*}
	H = \frac{1}{\sqrt{1+|\nabla f|^2}} H_0.
\end{equation*}
Thus if $|\nabla f(x)| \to \infty$ as $x \to \Sigma$, then $f(\Sigma)$ is a horizon in $(M^n,g)$.

\begin{theorem} \label{pi}
Let $\Omega$ be a bounded and open (but not necessarily connected) set in $\mathbb R^n$ and $\Sigma = \partial \Omega$. Let $f: \mathbb R^n \backslash \Omega \to \mathbb R$ be a smooth asymptotically flat function such that each connected component of $f(\Sigma)$ is in a level set of $f$ and $|\nabla f(x)| \to \infty$ as $x \to \Sigma$. Let $(M^n,g)$ be the graph of $f$ with the induced metric from $\mathbb R^n \backslash \Omega \times \mathbb R$ and ADM mass $m$. Let $H_0$ be the mean curvature of $\Sigma$ in $(\mathbb R^n \backslash \Omega, \delta)$. Then
\begin{equation*}
	m = \frac{1}{2(n-1)\omega_{n-1}} \int_{\Sigma} H_0 d\Sigma + \frac{1}{2(n-1)\omega_{n-1}} \int_{M^n} R \frac{1}{\sqrt{1+|\nabla f|^2}} dV_g.
\end{equation*}
\end{theorem}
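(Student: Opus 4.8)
My plan is to re-run the divergence-theorem argument underlying Theorem \ref{pmt}, but on the exterior region $\mathbb R^n\setminus\Omega$, which now carries an inner boundary along $\Sigma$ in addition to the sphere at infinity. The proof of Theorem \ref{pmt} rests on the pointwise identity (summing over repeated indices)
\begin{equation*}
	R\,\frac{1}{\sqrt{1+|\nabla f|^2}}\,dV_g = R\,dx = \operatorname{div}(V)\,dx, \qquad V_i = \frac{f_i\,\Delta f - f_j f_{ij}}{1+|\nabla f|^2},
\end{equation*}
where $\operatorname{div}$ and $dx$ are taken in the flat background and one verifies $\operatorname{div}(V)=R$ either from the Gauss equation for the graph hypersurface in $\mathbb R^{n+1}$ or by a direct expansion. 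I would take this identity as given and concentrate on the two boundary fluxes it produces.

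First I would apply the divergence theorem on the region bounded between a level set $\Sigma_t=\{f=t\}$ close to $\Sigma$ and the coordinate sphere $S_r$, with $t$ near the value of $f$ on $\Sigma$. The outer flux is exactly the one analyzed in Theorem \ref{pmt}: since $g_{ij,i}-g_{ii,j}=f_j\Delta f-f_i f_{ij}$ and $V$ agrees with this field to leading order as $|\nabla f|\to 0$, asymptotic flatness gives $\int_{S_r}V\cdot\nu\,dS_r\to 2(n-1)\omega_{n-1}\,m$ as $r\to\infty$.

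The new ingredient is the inner flux. On a level set the unit normal is parallel to $\nabla f$, so only the normal component of $V$ enters, and a short computation gives
\begin{equation*}
	V\cdot\frac{\nabla f}{|\nabla f|} = \frac{|\nabla f|^2\Delta f - f_i f_j f_{ij}}{|\nabla f|\,(1+|\nabla f|^2)} = H_0\,\frac{|\nabla f|^2}{1+|\nabla f|^2},
\end{equation*}
where $H_0=\operatorname{div}\!\big(\nabla f/|\nabla f|\big)$ is the flat mean curvature of the level set. Since $|\nabla f(x)|\to\infty$ as $x\to\Sigma$, the factor $|\nabla f|^2/(1+|\nabla f|^2)\to 1$, so the inner flux tends to $\int_\Sigma H_0\,d\Sigma$. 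Tracking orientation, the outward normal of the exterior region at $\Sigma$ points into $\Omega$, and a short case check shows the inner boundary contributes $-\int_\Sigma H_0\,d\Sigma$ whether $\nabla f$ points into or out of $\Omega$, the sign flip in the normal being compensated by the sign flip in $H_0$. Combining the two limits yields
\begin{equation*}
	\int_{M^n} R\,\frac{1}{\sqrt{1+|\nabla f|^2}}\,dV_g = 2(n-1)\omega_{n-1}\,m - \int_\Sigma H_0\,d\Sigma,
\end{equation*}
and rearranging gives the stated formula.

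The step I expect to be the main obstacle is justifying the limit at the horizon, where $f$ is smooth only on $\mathbb R^n\setminus\Omega$ and $|\nabla f|$ blows up. The field $V$ need not extend continuously to $\Sigma$ --- its tangential components can diverge --- so one cannot integrate $V$ over $\Sigma$ directly. The level-set formulation is exactly what rescues this: on each $\Sigma_t$ the normal is $\nabla f/|\nabla f|$, so the flux sees only the bounded normal component $H_0\,|\nabla f|^2/(1+|\nabla f|^2)$. One then lets $t$ tend to the boundary value of $f$ and invokes the smoothness of $\Sigma$ together with the hypothesis that each component of $f(\Sigma)$ lies in a level set to conclude that $\Sigma_t\to\Sigma$ with convergence of the induced area elements and mean curvatures. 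Showing these convergences are strong enough to interchange limit and integral is the one genuinely delicate point; the remainder is bookkeeping for the divergence theorem.
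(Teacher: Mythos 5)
Your proposal is correct, and its skeleton --- the divergence identity of Lemma \ref{lemma1}, the divergence theorem on the punctured region, and the identification of the inner flux with $\int_\Sigma H_0\,d\Sigma$ --- is exactly the paper's. The differences are in execution, and they mostly favor you. The paper applies the divergence theorem directly on $\mathbb R^n\setminus\Omega$ with boundary $\Sigma$, computes the boundary integrand via the decomposition $\Delta f=\Delta_\Sigma f+H^f(\nu,\nu)+H_0\,\nu(f)$ (with $\Delta_\Sigma f=0$) to arrive at the same quantity $\frac{|\nabla f|^2}{1+|\nabla f|^2}H_0$ that you obtain from $H_0=\nabla\cdot\left(\nabla f/|\nabla f|\right)$, and then replaces the factor $\frac{|\nabla f|^2}{1+|\nabla f|^2}$ by $1$ on the grounds that $|\nabla f|\to\infty$ at $\Sigma$ --- even though $|\nabla f|$, and hence the vector field, is not actually defined on $\Sigma$ itself. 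Your exhaustion by level sets $\Sigma_t$, with the limit taken as $t$ tends to the boundary value, is the honest version of that step, and your observation that only the normal component of $V$ (which stays bounded) enters the flux is precisely the right reason the limit exists. Your orientation bookkeeping --- that the inner boundary contributes $-\int_\Sigma H_0\,d\Sigma$ whichever way $\nabla f$ points, since flipping the normal flips the sign of $H_0$ --- is also more complete than the paper's, which simply fixes $\nu=-\nabla f/|\nabla f|$ without discussing the other orientation. Two small caveats: when $\Omega$ has several components whose images lie in different level sets, the level-set argument must be run separately near each component (a single value of $t$ does not serve all of them); and the convergence of the area elements and mean curvatures of $\Sigma_t$ to those of $\Sigma$, which you correctly flag as the remaining analytic point, is left unaddressed --- but the paper's own proof is no more rigorous there, so this is not a gap relative to the published argument.
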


Let $\Omega_i$ be the connected components of $\Omega$, $i=1,\ldots,k$, and let $\Sigma_i = \partial \Omega_i$. If we assume that each $\Omega_i$ is convex, then we have the following Penrose inequality:
\begin{corollary}[Penrose inequality for graphs on $\mathbb R^n$ with convex boundaries] \label{pic}
With the same hypotheses as in Theorem \ref{pi}, and the additional assumption that each connected component $\Omega_i$ of $\Omega$ is convex, then
\begin{equation*}
	m \geq \sum_{i=1}^k \frac 12 \left( \frac{|\Sigma_i|}{\omega_{n-1}} \right) ^{\frac{n-2}{n-1}} + \frac{1}{2(n-1)\omega_{n-1}} \int_{M^n} R \frac{1}{\sqrt{1+|\nabla f|^2}} dV_g.
\end{equation*}
In particular,
\begin{equation*}
	R \geq 0\ \mbox{ implies }\ m \geq \sum_{i=1}^k \frac 12 \left( \frac{|\Sigma_i|}{\omega_{n-1}} \right) ^{\frac{n-2}{n-1}}.
\end{equation*}
\end{corollary}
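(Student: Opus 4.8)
The plan is to start from the mass formula of Theorem \ref{pi} and estimate the boundary term from below, component by component, using the convexity hypothesis together with a Minkowski-type inequality that follows from the Aleksandrov--Fenchel inequalities. Since $R \ge 0$ makes the bulk integral nonnegative, and since the boundary integral splits additively over the connected components $\Sigma_i = \partial\Omega_i$, the whole corollary reduces to the single geometric estimate
\[
	\int_{\Sigma_i} H_0 \, d\Sigma \ge (n-1)\,\omega_{n-1}^{1/(n-1)}\, |\Sigma_i|^{(n-2)/(n-1)}
\]
for each convex component, where $H_0$ is the sum of the principal curvatures of $\Sigma_i$ in $(\mathbb{R}^n,\delta)$.

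First I would recall the quermassintegrals $W_k(\Omega_i)$ of the convex body $\Omega_i$, defined through the Steiner formula $\mathrm{Vol}(\Omega_i + tB) = \sum_{k=0}^n \binom{n}{k} W_k(\Omega_i)\, t^k$, and record the two curvature-integral identities I need, namely $W_1(\Omega_i) = \tfrac{1}{n}|\Sigma_i|$ and $W_2(\Omega_i) = \tfrac{1}{n(n-1)}\int_{\Sigma_i} H_0\, d\Sigma$, together with $W_n = \mathrm{Vol}(B) = \omega_{n-1}/n$. The Aleksandrov--Fenchel inequalities assert the log-concavity $W_k^2 \ge W_{k-1}W_{k+1}$, from which one extracts that $(W_k/W_n)^{1/(n-k)}$ is monotone in $k$; comparing the indices $k=1$ and $k=2$ yields $W_2 \ge W_n^{1/(n-1)} W_1^{(n-2)/(n-1)}$. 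Substituting the identities above and simplifying the powers of $n$ gives exactly the displayed estimate.

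Then I would assemble the pieces. Dividing the estimate by $2(n-1)\omega_{n-1}$ produces
\[
	\frac{1}{2(n-1)\omega_{n-1}}\int_{\Sigma_i}H_0\, d\Sigma \ge \frac{1}{2}\left(\frac{|\Sigma_i|}{\omega_{n-1}}\right)^{(n-2)/(n-1)};
\]
summing over $i$ and inserting the result into the formula of Theorem \ref{pi} gives the first displayed inequality of the corollary, and discarding the manifestly nonnegative bulk term (when $R\ge 0$) gives the second.

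The main obstacle is essentially bookkeeping: pinning down the correct normalization of $H_0$ (sum versus average of principal curvatures) and the constants in the quermassintegral identities, and checking that the Aleksandrov--Fenchel chain is applied in the direction that lower-bounds $W_2$ by $W_1$ rather than the reverse. The correctness of both the normalization and the direction can be confirmed by the sphere of radius $\rho$, for which $H_0 = (n-1)/\rho$ and $|\Sigma_i| = \omega_{n-1}\rho^{n-1}$ force equality throughout; this also isolates the rigidity case, round spheres, corresponding to pieces of the Schwarzschild metric.
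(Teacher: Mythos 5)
Your proposal is correct and takes essentially the same approach as the paper: the paper likewise reduces the corollary to the per-component Minkowski-type inequality $\frac{1}{2(n-1)\omega_{n-1}}\int_{\Sigma_i}H_0\,d\Sigma \geq \frac 12 \left( |\Sigma_i|/\omega_{n-1} \right)^{(n-2)/(n-1)}$ (its Lemma \ref{lemma2}) and deduces it from the Aleksandrov--Fenchel inequality, then sums over components and invokes Theorem \ref{pi}. The only difference is normalization bookkeeping: the paper uses the boundary curvature integrals $V_k = \int_{\Sigma}\sigma_k$ and the special case $V_1^{n-1} \geq V_0^{n-2}V_{n-1}$, which under the identification $V_k = n\,W_{k+1}$ is precisely your Steiner-formula inequality $W_2^{n-1} \geq W_1^{n-2}W_n$.
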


\begin{remark}
Since
\begin{equation*}
	\sum_{i=1}^k \frac 12 \left( \frac{|\Sigma_i|}{\omega_{n-1}} \right) ^{\frac{n-2}{n-1}} \geq \frac 12 \left( \frac{\sum_{i=1}^k|\Sigma_i|}{\omega_{n-1}} \right) ^{\frac{n-2}{n-1}} =	\frac 12 \left( \frac{A}{\omega_{n-1}} \right) ^{\frac{n-2}{n-1}},
\end{equation*}
Corollary \ref{pic} is a stronger statement than \eqref{rpi} on top of the fact that the lower bound for the ADM mass involves a nonnegative integral when the scalar curvature is nonnegative.
\end{remark}

\section{Positive Mass Theorem for Graphs over $\mathbb R^n$}

Let $(M^n,g) = (\mathbb R^n, \delta+df\otimes df)$ be the graph of a smooth asymptotically flat function $f:\mathbb R^n \to \mathbb R$. Since $g_{ij} = \delta_{ij} + f_if_j$, the inverse of $g_{ij}$ is
	\[g^{ij} = \delta^{ij} - \frac{f^if^j}{1+|\nabla f|^2},\]
where the norm of $\nabla f$ is taken with respect to the flat metric $\delta$ on $\mathbb R^n$. We first compute the Christoffel symbols $\Gamma_{ij}^k$ of $(M^n,g)$:
\begin{align*}
	\Gamma_{ij}^k &= \frac 12 g^{km} (g_{im,j} + g_{jm,i} - g_{ij,m}) \\
	&= \frac 12 \left(\delta^{km} - \frac{f^kf^m}{1+|\nabla f|^2} \right) (f_{ij}f_m + f_if_{jm} + f_{ij}f_m + f_jf_{im} - f_{im}f_j - f_if_{jm}) \\
	&= \frac 12 \left(\delta^{km} - \frac{f^kf^m}{1+|\nabla f|^2} \right) 2f_{ij}f_m \\
	&= f_{ij}f^k - \frac{f_{ij}f^k|\nabla f|^2}{1+|\nabla f|^2} \\
	&= \frac{f_{ij}f^k}{1+|\nabla f|^2}.
\end{align*}

\begin{remark}
	Since the indices are raised and lowered using the flat metric on $\mathbb R^n$, it will be notationally more convenient from now on to write everything as lower indices, with the implicit assumption that any repeated indices are being summed over as usual.
\end{remark}

With the above remark in mind, we have
\begin{align*}
	\Gamma_{ij}^k		&= \frac{f_{ij}f_k}{1+|\nabla f|^2} \\
	\Gamma_{ij,k}^k &= \frac{f_{ijk}f_k}{1+|\nabla f|^2} + \frac{f_{ij}f_{kk}}{1+|\nabla f|^2} - \frac{2f_{ij}f_{kl}f_kf_l}{(1+|\nabla f|^2)^2}.
\end{align*}

We can now compute the scalar curvature $R$ of $(M^n,g)$ using the coordinate expression for scalar curvature:
\begin{align*}
  R &= g^{ij} (\Gamma_{ij,k}^k - \Gamma_{ik,j}^k + \Gamma_{ij}^l \Gamma_{kl}^k - \Gamma_{ik}^l \Gamma_{jl}^k) \\
  &= \left(\delta_{ij} - \frac{f_if_j}{1+|\nabla f|^2} \right) \left( \frac{f_{ijk}f_k}{1+|\nabla f|^2} + \frac{f_{ij}f_{kk}}{1+|\nabla f|^2} - \frac{2f_{ij}f_{kl}f_kf_l}{(1+|\nabla f|^2)^2} - \frac{f_{ijk}f_k}{1+|\nabla f|^2} \right. \\
  &\quad - \frac{f_{ik}f_{jk}}{1+|\nabla f|^2} + \left. \frac{2f_{ik}f_{jl}f_kf_l}{(1+|\nabla f|^2)^2} + \frac{f_{ij}f_{kl}f_kf_l}{(1+|\nabla f|^2)^2} - \frac{f_{ik}f_{jl}f_kf_l}{(1+|\nabla f|^2)^2} \right) \\
  &= \left( \delta_{ij} - \frac{f_if_j}{1+|\nabla f|^2} \right) \left( \frac{f_{ij}f_{kk}}{1+|\nabla f|^2} - \frac{f_{ik}f_{jk}}{1+|\nabla f|^2} - \frac{f_{ij}f_{kl}f_kf_l}{(1+|\nabla f|^2)^2} + \frac{f_{ik}f_{jl}f_kf_l}{(1+|\nabla f|^2)^2} \right) \\
  &= \frac{1}{1+|\nabla f|^2} (f_{ii}f_{kk} - f_{ik}f_{ik}) - \frac{f_kf_l}{(1+|\nabla f|^2)^2} (f_{ii}f_{kl} - f_{ik}f_{il}) \\
  &\quad - \frac{f_if_j}{(1+|\nabla f|^2)^2} (f_{ij}f_{kk} - f_{ik}f_{jk}) - \frac{f_if_jf_kf_l}{(1+|\nabla f|^2)^3}(f_{ij}f_{kl} - f_{ik} f_{jl}). 
\end{align*}

By symmetry, the last term in the last expression is 0. After relabeling the indices, the expression for the scalar curvature is
\begin{equation}\label{R on R^n}
	R = \frac{1}{1+|\nabla f|^2} \left( f_{ii}f_{jj} - f_{ij}f_{ij} - \frac{2f_jf_k}{1+|\nabla f|^2} (f_{ii}f_{jk} - f_{ij}f_{ik}) \right).
\end{equation}

Let us denote by $\nabla\cdot$ the divergence operator on $(\mathbb R^n,\delta)$. The key observation we need to prove Theorem \ref{pmt} is the following lemma:
\begin{lemma} \label{lemma1}
The scalar curvature $R$ of the graph $(\mathbb R^n,\delta+df\otimes df)$ satisfies
\begin{equation*}
  R = \nabla \cdot \left( \frac{1}{1+|\nabla f|^2} (f_{ii}f_j - f_{ij}f_i)\partial_j \right).
\end{equation*}
\end{lemma}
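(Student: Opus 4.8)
The plan is to prove the identity by a direct computation: I would expand the divergence on the right-hand side and check that it reproduces, term for term, the explicit formula \eqref{R on R^n} for $R$ established above. To keep the bookkeeping manageable I would abbreviate $w = 1 + |\nabla f|^2$ and write the vector field as $X_j = w^{-1}(f_{ii}f_j - f_{ij}f_i)$, so that the claim becomes $\partial_j X_j = R$. Applying the product rule, the computation splits cleanly into two pieces: the piece where $\partial_j$ differentiates the bracketed quantity $f_{ii}f_j - f_{ij}f_i$, and the piece where it differentiates the scalar prefactor $w^{-1}$.

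First I would treat the piece where $\partial_j$ falls on the bracket. Here $\partial_j(f_{ii}f_j) = f_{iij}f_j + f_{ii}f_{jj}$ and $\partial_j(f_{ij}f_i) = f_{ijj}f_i + f_{ij}f_{ij}$. The crucial observation is that the two third-derivative terms $f_{iij}f_j$ and $f_{ijj}f_i$ become identical after relabeling the summed indices, because mixed partials commute ($f_{iij} = f_{ijj}$ up to renaming $i \leftrightarrow j$); hence they cancel. What survives is exactly $w^{-1}(f_{ii}f_{jj} - f_{ij}f_{ij})$, matching the leading term of \eqref{R on R^n}. This cancellation of third derivatives is precisely what makes the divergence structure work, and it is the only step that requires genuine attention.

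Next I would treat the piece where $\partial_j$ falls on $w^{-1}$. Since $\partial_j w = 2f_k f_{kj}$, we have $\partial_j(w^{-1}) = -2w^{-2} f_k f_{kj}$, so this piece equals $-2w^{-2} f_k f_{kj}(f_{ii}f_j - f_{ij}f_i)$. Using the symmetry $f_{kj} = f_{jk}$ and a relabeling of the dummy indices, I would verify that this is identically equal to $-2w^{-2} f_j f_k(f_{ii}f_{jk} - f_{ij}f_{ik})$, which is exactly the remaining term of \eqref{R on R^n}. Adding the two pieces then yields $\nabla \cdot X = R$. The main obstacle here is purely notational index-juggling rather than any conceptual difficulty; once the third-derivative cancellation is noticed, the rest is a routine verification that the two index contractions coincide.
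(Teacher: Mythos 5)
Your proposal is correct and follows essentially the same route as the paper's proof: a direct expansion of the divergence by the product rule, with the third-derivative terms $f_{iij}f_j$ and $f_{ijj}f_i$ cancelling by symmetry of mixed partials, and the term coming from differentiating $(1+|\nabla f|^2)^{-1}$ matching the remaining piece of \eqref{R on R^n} after relabeling dummy indices. The index-relabeling identifications you assert do hold, so the verification goes through exactly as you outline.
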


\begin{proof}
This is a direct calculation:
\begin{eqnarray*}
 & & \nabla \cdot \left( \frac{1}{1+|\nabla f|^2} ( f_{ii}f_j - f_{ij}f_i ) \partial_j \right) \\
 &=& \frac{1}{1+|\nabla f|^2} ( f_{iij}f_j + f_{ii}f_{jj} - f_{ijj}f_i - f_{ij}f_{ij} ) - \frac{2f_{jk}f_k}{(1+|\nabla f|^2)^2} (f_{ii}f_j - f_{ij}f_i) \\
 &=& \frac{1}{1+|\nabla f|^2} \left( f_{ii}f_{jj} - f_{ij}f_{ij} - \frac{2f_jf_k}{1+|\nabla f|^2} (f_{ii}f_{jk} - f_{ij}f_{ik}) \right)\\
 &=& R
\end{eqnarray*}
by \eqref{R on R^n}.
\end{proof}

We are now in the position to prove Theorem \ref{pmt}:
\begin{proof}[Proof of Theorem \ref{pmt}]
By definition, the ADM mass of $(M^n,g)=(\mathbb R^n,\delta+df\otimes df)$ is
\begin{align*}
	m &= \lim_{r\to\infty} \frac{1}{2(n-1)\omega_{n-1}} \int_{S_r} (g_{ij,i} - g_{ii,j}) \nu_j dS_r \\
	&= \lim_{r\to\infty} \frac{1}{2(n-1)\omega_{n-1}} \int_{S_r} (f_{ii}f_j + f_{ij}f_i - 2f_{ij}f_i) \nu_j dS_r \\
	&= \lim_{r\to\infty} \frac{1}{2(n-1)\omega_{n-1}} \int_{S_r} (f_{ii}f_j - f_{ij}f_i) \nu_j dS_r.
\end{align*}
By the asymptotic flatness assumption, the function $1/(1+|\nabla f|^2)$ goes to 1 at infinity. Hence we can alternately write the mass as
\begin{equation*}
	m = \lim_{r\to\infty} \frac{1}{2(n-1)\omega_{n-1}} \int_{S_r} \frac{1}{1+|\nabla f|^2} (f_{ii}f_j - f_{ij}f_i) \nu_j dS_r.
\end{equation*}
Now apply the divergence theorem in $(\mathbb R^n,\delta)$ and use Lemma \ref{lemma1} to get
\begin{align*}
	m &= \frac{1}{2(n-1)\omega_{n-1}} \int_{\mathbb R^n} \nabla \cdot \left( \frac{1}{1+|\nabla f|^2} (f_{ii}f_j - f_{ij}f_i)\partial_j \right) dV_{\delta} \\
	&= \frac{1}{2(n-1)\omega_{n-1}} \int_{\mathbb R^n} R dV_{\delta} \\
	&= \frac{1}{2(n-1)\omega_{n-1}} \int_{M^n} R \frac{1}{\sqrt{1+|\nabla f|^2}} dV_g
\end{align*}
since
\begin{equation*}
	dV_g = \sqrt{\det g} dV_{\delta} = \sqrt{1+|\nabla f|^2} dV_{\delta}.
\end{equation*}
\end{proof}

\begin{remark}
	If $(M^n,g)$ is the graph of a smooth {\it spherically symmetric} function $f=f(r)$ on $\mathbb R^n$, then it turns out that the ADM mass of $(M^n,g)$ is nonnegative even without the nonnegative scalar curvature assumption. To see this, let $f_r=\partial f/\partial r$ denote the radial derivative of $f$. By the chain rule, the derivatives of $f$ satisfy
\begin{align*}
	f_i 	 &= f_r\frac{x_i}{r} \\
	f_{ij} &= f_{rr}\frac{x_ix_j}{r^2} + f_r\left(\frac{\delta_{ij}}{r} - \frac{x_ix_j}{r^3}\right).
\end{align*}
The ADM mass of $(M^n,g)$ is
\begin{align*}
	m &= \lim_{r\to\infty} \frac{1}{2(n-1)\omega_{n-1}} \int_{S_r} (g_{ij,i} - g_{ii,j}) \nu_j dS_r \\
		&= \lim_{r\to\infty} \frac{1}{2(n-1)\omega_{n-1}} \int_{S_r} (f_{ii}f_j - f_{ij}f_i) \nu_j dS_r \\
		&= \lim_{r\to\infty} \frac{1}{2(n-1)\omega_{n-1}} \int_{S_r} f_{rr} \frac{x_i^2x_j^2}{r^2} + f_r^2 \left( \frac{x_j^2}{r^3} - \frac{x_i^2x_j^2}{r^5} \right) - f_{rr}\frac{x_i^2x_j^2}{r^2} \\
		&\qquad - f_r^2 \left( \frac{\delta_{ij}x_ix_j}{r^3} - \frac{x_i^2x_j^2}{r^5} \right) dS_r \\
		&= \lim_{r\to\infty} \frac{1}{2(n-1)\omega_{n-1}} \int_{S_r} \frac{2f_r^2}{r}\geq 0.
\end{align*}
A consequence of this fact and Theorem \ref{pmt} is that there are no spherically symmetric asymptotically flat smooth functions on $\mathbb R^n$ whose graphs have negative scalar curvature everywhere.
\end{remark}

\section{Penrose Inequality for Graphs over $\mathbb R^n$}

Let $\Omega$ be a bounded open set in $\mathbb R^n$ and $\Sigma = \partial \Omega$. If $f: \mathbb R^n \backslash \Omega \to \mathbb R$ is a smooth asymptotically flat function such that each connected component of $f(\Sigma)$ is in a level of $f$ and $|\nabla f(x)|\to\infty$ as $x\to\Sigma$, then the graph of $f$, $(M^n,g) = (\mathbb R^n \backslash \Omega, \delta + df\otimes df)$, is an asymptotically flat manifold with area outer minimizing horizon $\Sigma$. In this setting, proving Theorem \ref{pi} is a matter of keeping track of the extra boundary term when we apply the divergence theorem in the proof of Theorem \ref{pmt}. As before, any repeated indices are being summed over.

\begin{proof}[Proof of Theorem \ref{pi}]
	As in the proof of Theorem \ref{pmt}, we can write the mass of $(M^n,g)$ as
\begin{equation*}
	m = \lim_{r\to\infty} \frac{1}{2(n-1)\omega_{n-1}} \int_{S_r} \frac{1}{1+|\nabla f|^2} (f_{ii}f_j - f_{ij}f_i) \nu_j dS_r. 
\end{equation*}
The difference here is that when we apply the divergence theorem, we get an extra boundary integral: 
\begin{align*}
	m &= \lim_{r\to\infty} \frac{1}{2(n-1)\omega_{n-1}} \int_{S_r} \frac{1}{1+|\nabla f|^2} (f_{ii}f_j - f_{ij}f_i) \nu_j dS_r \\
	&= \frac{1}{2(n-1)\omega_{n-1}} \int_{\mathbb R^n\backslash\Omega} \nabla \cdot \left( \frac{1}{1+|\nabla f|^2} (f_{ii}f_j - f_{ij}f_i)\partial_j \right) dV_{\delta} \\
	&\quad - \frac{1}{2(n-1)\omega_{n-1}} \int_{\Sigma} \frac{1}{1+|\nabla f|^2} (f_{ii}f_j - f_{ij}f_i) \nu_j d\Sigma \\
	&= \frac{1}{2(n-1)\omega_{n-1}} \int_{M^n} R\frac{1}{\sqrt{1+|\nabla f|^2}} dV_g \\
	&\quad - \frac{1}{2(n-1)\omega_{n-1}} \int_{\Sigma} \frac{1}{1+|\nabla f|^2} (f_{ii}f_j - f_{ij}f_i) \nu_j d\Sigma.
\end{align*}
The outward normal to $\Sigma$ is $\nu=-\nabla f/ |\nabla f|$. Let $\Delta f$ be the Laplacian of $f$ in $(M^n,g)$ and $\Delta_{\Sigma}f$ the Laplacian of $f$ along $\Sigma$. Let $H^f$ denote the Hessian of $f$ and $H_0$ the mean curvature of $\Sigma$ with respect to the flat metric. We will use the following well known formula to relate the two Laplacians:
\begin{align*}
	\Delta f &= \Delta_{\Sigma}f + H^f(\nu,\nu) + H_0 \cdot \nu(f) \\
	&= \frac{1}{|\nabla f|} H^f \left( \nabla f, \frac{\nabla f}{|\nabla f|} \right) + H_0 |\nabla f|,
\end{align*}
where $\Delta_{\Sigma}f=0$ since $f$ is constant on $\Sigma$. Now
\begin{align*}
	& -\frac{1}{1+|\nabla f|^2} (f_{ii}f_j - f_{ij}f_i) \nu_j \\
	&= \frac{1}{1+|\nabla f|^2} \left[ (\Delta f) |\nabla f| - H^f \left( \nabla f, \frac{\nabla f}{|\nabla f|} \right) \right] \\
	&= \frac{1}{1+|\nabla f|^2} \left[ \left( \frac{1}{|\nabla f|} H^f \left( \nabla f, \frac{\nabla f}{|\nabla f|} \right) + H_0 |\nabla f| \right) |\nabla f| - H^f \left( \nabla f, \frac{\nabla f}{|\nabla f|} \right) \right] \\
	&= \frac{|\nabla f|^2}{1+|\nabla f|^2} H_0.
\end{align*}
Therefore,
\begin{align*}
	m &= \frac{1}{2(n-1)\omega_{n-1}} \int_{M^n} R\frac{1}{\sqrt{1+|\nabla f|^2}} dV_g + \frac{1}{2(n-1)\omega_{n-1}} \int_{\Sigma} \frac{|\nabla f|^2}{1+|\nabla f|^2} H_0 d\Sigma \\
	&= \frac{1}{2(n-1)\omega_{n-1}} \int_{M^n} R\frac{1}{\sqrt{1+|\nabla f|^2}} dV_g + \frac{1}{2(n-1)\omega_{n-1}} \int_{\Sigma} H_0 d\Sigma.
\end{align*}
\end{proof}

Let us denote by $\Omega_i$, $i=1,\ldots,k$ the connected components of the bounded open set $\Omega$. In the case that each $\Omega_i$ is convex, it turns out we can obtain a lower bound for the boundary integral in Theorem \ref{pi}. To do this, we will need the following lemma, which is a special case of the Aleksandrov-Fenchel inequality \cite{schneider}.
\begin{lemma} \label{lemma2}
If $\Sigma$ is a convex surface in $\mathbb R^n$ with mean curvature $H_0$ and area $|\Sigma|$, then
\begin{equation*}
	\frac{1}{2(n-1)\omega_{n-1}} \int_{\Sigma} H_0 \geq \frac 12 \left( \frac{|\Sigma|}{\omega_{n-1}} \right) ^{\frac{n-2}{n-1}}.
\end{equation*}
\end{lemma}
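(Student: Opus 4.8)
The plan is to translate both the boundary integral $\int_\Sigma H_0$ and the area $|\Sigma|$ into the language of quermassintegrals of the convex body $K\subset\mathbb R^n$ bounded by $\Sigma$, and then to invoke the Aleksandrov--Fenchel inequality directly as the one deep ingredient. Recall that for a smooth convex body $K$ the quermassintegrals $W_0(K),\ldots,W_n(K)$ are the coefficients in the Steiner formula $\mathrm{Vol}(K+tB)=\sum_{i=0}^n\binom{n}{i}W_i(K)\,t^i$, where $B$ is the closed unit ball, and that each $W_{i+1}(K)$ is a fixed multiple of the integral over $\Sigma=\partial K$ of the $i$-th elementary symmetric polynomial $\sigma_i$ of the principal curvatures.

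Concretely, the first step is to record the two identities
\[
	|\Sigma| = n\,W_1(K), \qquad \int_\Sigma H_0\, d\Sigma = n(n-1)\,W_2(K),
\]
valid because $H_0=\sigma_1$ is the trace of the second fundamental form, together with the fact that the top quermassintegral is universal: $W_n(K)$ equals the volume of the unit $n$-ball, namely $\omega_{n-1}/n$. I would verify these normalizations against the round sphere $\Sigma=S^{n-1}_\rho$, whose principal curvatures all equal $1/\rho$, to be certain the constants are consistent with equality holding in the conclusion.

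The second step is to apply the Aleksandrov--Fenchel inequality. In the form I need it follows from the log-concavity $W_i(K)^2\ge W_{i-1}(K)\,W_{i+1}(K)$ of the quermassintegrals: since $\log W_i(K)$ is then a concave function of $i$ and the index $2$ is the convex combination $2=\tfrac{n-2}{n-1}\cdot 1+\tfrac{1}{n-1}\cdot n$, concavity gives
\[
	W_2(K)^{n-1}\ge W_1(K)^{n-2}\,W_n(K).
\]
Substituting the identities above and using $W_n(K)=\omega_{n-1}/n$ turns this into $\bigl(\int_\Sigma H_0\bigr)^{n-1}\ge (n-1)^{n-1}\,\omega_{n-1}\,|\Sigma|^{n-2}$; taking $(n-1)$-th roots and dividing by $2(n-1)\omega_{n-1}$ then yields exactly the asserted bound.

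Since the Aleksandrov--Fenchel inequality is being cited, there is no genuine analytic obstacle here, and the only thing that requires care is the bookkeeping. The main pitfall is the clash of normalization conventions --- normalized versus unnormalized mean curvature, the factor relating the area $\omega_{n-1}$ of the unit sphere to the volume $\omega_{n-1}/n$ of the unit ball, and the precise constants in the relation $W_{i+1}(K)=\mathrm{const}\cdot\int_\Sigma\sigma_i$ --- so I would pin all of these down by forcing equality in the round-sphere case before trusting the final algebra.
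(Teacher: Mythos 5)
Your proof is correct and takes essentially the same route as the paper: both translate $\int_\Sigma H_0$, $|\Sigma|$, and $\omega_{n-1}$ into quermassintegrals and then invoke the Aleksandrov--Fenchel inequality in the form $W_2^{n-1}\ge W_1^{n-2}W_n$, which is exactly the paper's $V_1^{n-1}\ge V_0^{n-2}V_{n-1}$ once one accounts for the shift between your Steiner-formula quermassintegrals $W_{k+1}(K)$ and the paper's curvature integrals $V_k=\int_\Sigma\sigma_k$ (so that your $W_n=\omega_{n-1}/n$ plays the role of the paper's $V_{n-1}=\omega_{n-1}$). The only cosmetic difference is that you derive the three-index inequality from the log-concavity $W_i^2\ge W_{i-1}W_{i+1}$ of consecutive quermassintegrals, whereas the paper cites the general multi-index form of Aleksandrov--Fenchel directly.
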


\begin{proof}
Let $\Sigma \subset \mathbb R^n$ be a convex surface with principal curvatures $\kappa_1, \ldots, \kappa_{n-1}$. Let
\begin{equation*}
	\sigma_j (\kappa_1, \ldots, \kappa_{n-1}) = \binom{n-1}{j}^{-1} \sum_{1\leq i_1<\cdots<i_k\leq n-1} \kappa_{i_1} \cdots \kappa_{i_j}
\end{equation*}
be the $j$th normalized elementary symmetric functions in $\kappa_1, \ldots, \kappa_{n-1}$ for $j=1,\ldots, n-1$. In particular,
\begin{align*}
	\sigma_0 (\kappa_1, \ldots, \kappa_{n-1}) &= 1 \\
	\sigma_1 (\kappa_1, \ldots, \kappa_{n-1}) &= \frac{1}{n-1} \sum_{i=1}^{n-1} \kappa_i = \frac{1}{n-1} H_0\\
	\sigma_{n-1} (\kappa_1, \ldots, \kappa_{n-1}) &= \prod_{i=1}^{n-1} \kappa_i.
\end{align*}
The $k$th quermassintegral $V_k$ of $\Sigma$ is defined to be
\begin{equation*}
	V_k = \int_{\Sigma} \sigma_k (\kappa_1, \ldots, \kappa_{n-1}).
\end{equation*}
A special case of the Aleksandrov-Fenchel inequality states that, for $0\leq i<j<k\leq n-1$,
\begin{equation*}
	V_j^{k-1} \geq V_i^{k-j} V_k^{j-i}.
\end{equation*}
Taking $i=0$, $j=1$, $k=n-1$,
\begin{equation}\label{af}
	V_1^{n-1} \geq V_0^{n-2} V_{n-1}.
\end{equation}
Now
\begin{align*}
	V_0 &= \int_{\Sigma} \sigma_0 (\kappa_1, \ldots, \kappa_{n-1}) = |\Sigma|  \\
	V_1 &= \int_{\Sigma} \sigma_1 (\kappa_1, \ldots, \kappa_{n-1}) = \frac{1}{n-1} \int_{\Sigma} H_0 \\
	V_{n-1} &= \int_{\Sigma} \sigma_{n-1} (\kappa_1, \ldots, \kappa_{n-1}) = \omega_{n-1}.  
\end{align*}
Thus \eqref{af} becomes
\begin{align*}
	\left( \frac{1}{n-1} \int_{\Sigma} H_0 \right) ^{n-1} &\geq |\Sigma|^{n-2} \omega_{n-1} \\
	\frac{1}{n-1} \int_{\Sigma} H_0 &\geq |\Sigma| ^{\frac{n-2}{n-1}} \omega_{n-1} ^{\frac{1}{n-1}} \\
	\frac{1}{2(n-1)\omega_{n-1}} \int_{\Sigma} H_0 &\geq \frac 12 \left( \frac{|\Sigma|}{\omega_{n-1}} \right) ^{\frac{n-2}{n-1}}
\end{align*}
as claimed.
\end{proof}
Now Corollary \ref{pic} follows directly from Theorem \ref{pi} and Lemma \ref{lemma2}.

\section{Discussions and Acknowledgements}

We point out that the technique in this paper can be used to study the mass of an asymptotically flat manifold that can be isometrically embedded as a graph in Minkowski space, and a second paper in this direction is currently under progress. In this setting, zero area singularities can arise. The standard sample of such manifolds is the Schwarzschild metric with $m<0$. For more on this topic, we refer the readers to \cites{bray_npms, bray_jauregui}.

We also point out that recently Schwartz \cite{schwartz} proved a volumetric Penrose inequality for conformally flat manifolds in all dimensions $n\geq 3$. 

I would like to thank my advisor Hubert Bray for suggesting this problem and for his supervision and guidance. I would also like to thank Ben Andrews for pointing out the Aleksandrov-Fenchel inequality. Finally, I would like to thank Graham Cox and Jeffrey Jauregui for many helpful discussions.

\begin{bibdiv}
\begin{biblist}

\bib{adm}{article}{
	title={Coordinate invariance and energy expressions in General Relativity},
	author={R. Arnowitt},
	author={S. Deser},
	author={C. Misner},
	journal={Phys. Rev.},
	volume={122},
	date={1961},
	pages={997--1006}
}	

\bib{bartnik}{article}{
	title={The mass of an asymptotically flat manifold},
	author={R. Bartnik},
	journal={Comm. Pure Appl. Math.},
	volume={39},
	date={1986},
	pages={661--693}
}	

\bib{bray_RPI}{article}{
	title={Proof the Riemannian Penrose Inequality using the Positive Mass Theorem},
	author={H. Bray},
	journal={J. Differential Geom.},
	volume={59},
	date={2001},
	pages={177--267}
}	

\bib{bray_npms}{article}{
	title={Negative point mass singularities in General Relativity},
	author={H. Bray},
	eprint={http://www.newton.ac.uk/webseminars/pg+ws/2005/gmr/0830/bray/},
	conference={
		title={Global problems in mathematical relativity},
		address={Isaac Newton Institute, University of Cambridge},
		date={2005-08-30}
	}
}	

\bib{bray_jauregui}{article}{
	title={A Geometric Theory of Zero Area Singularities in General Relativity},
	author={H. Bray},
	author={J. Jauregui},
	eprint={http://arxiv.org/abs/0909.0522v1},
	date={2009},
}

\bib{bray_lee}{article}{
	title={On the Riemannian Penrose Inequality in dimensions less than 8},
	author={H. Bray},
	author={D. Lee},
	eprint={http://arxiv.org/abs/0705.1128v1},
	date={2007}
}

\bib{geroch}{article}{
  title={Energy Extraction},
  author={R. Geroch},
  journal={Ann. New York Acad. Sci.},
  volume={224},
  date={1973},
  pages={108--117}
}

\bib{imcf}{article}{
	title={The Inverse Mean Curvature Flow and the Riemannian Penrose Inequality},
	author={G.  Huisken},
	author={T. Ilmanen},
	journal={J. Differential Geom.},
	volume={59},
	date={2001},
	pages={353--437}
}	

\bib{jang_wald}{article}{
	title={The Positive Energy Conjecture and the Cosmic Censor Hypothesis},
	author={P.S. Jang},
	author={R. Wald},
	journal={J. Math. Phys.},
	volume={18},
	date={1977},
	pages={41-44}
}

\bib{schneider}{book}{
	title={Convex bodies: the Brunn-Minkowski theory},
	author={R. Schneider},
	publisher={Cambridge University Press},
	date={1993},
}

\bib{schoen}{article}{
	title={Variational theory for the total scalar curvature functional for Riemannian metrics and related topics},
	author={R. Schoen},
	journal={Topics in the Calculus of Variations, Lecture Notes in Math.},
	volume={1365},
	date={1987},
	pages={120--154}
}

\bib{schoen_yau1}{article}{
	title={On the proof of the Positive Mass Conjecture in General Relativity},
	author={R. Schoen},
	author={S.-T. Yau},
	journal={Comm. Math. Phys.},
	volume={65},
	date={1979},
	pages={45--76}
}

\bib{schoen_yau2}{article}{
	title={On the Structure of Manifolds with Positive Scalar Curvature},
	author={R. Schoen},
	author={S.-T. Yau},
	journal={Manuscripta Math.},
	volume={28(1-3)},
	date={1979},
	pages={159-183}
}

\bib{schwartz}{article}{
	title={A Volumetric Penrose Inequality for Conformally Flat Manifolds},
	author={F. Schwartz},
	eprint={http://arxiv.org/abs/1009.1587v2},
	date={2010},
}

\bib{witten}{article}{
	title={A new proof of the Positive Energy Theorem},
	author={E. Witten},
	journal={Comm. Math. Phys.},
	volume={80},
	date={1981},
	pages={381-402}
}	

\end{biblist}
\end{bibdiv}
\end{document}